\theoremstyle{lema}
\theoremstyle{proposition}
\newtheorem{proposition}{Proposition}[section]
\theoremstyle{theorem}
\newtheorem{theorem}{Theorem}[section]
\theoremstyle{theorem}
\newtheorem{remark}{Remark}[section]
\theoremstyle{corollary}
\newtheorem{corollary}{Corolarry}[section]
\theoremstyle{definition}
\newtheorem{definition}{Definition}[section]
\theoremstyle{example}
\newtheorem{example}{Example}[section]
\def\n{\mathbb N}
\providecommand{\keywords}[1]
{
	\small	
	\textbf{\textit{Keywords---}} #1
}
\providecommand{\msc}[1]
{
	\small	
	\textbf{\textit{Mathematics Subject Classification---}} #1
}
\title{Mappings contracting triangles}
\author{Ovidiu Popescu, Cristina Maria Păcurar\\  
\\ \small{Faculty of Mathematics and Computer Science,}\\ \small{Transilvania University of Bra\c sov, Bulevardul Eroilor 29, Bra\c sov}\\
\small{email: {ovidiu.popescu@unitbv.ro}, {cristina.pacurar@unitbv.ro}  } }
\date{}
\begin{document}
	
	\maketitle

	\begin{abstract}
		The aim of the current paper is to introduce a new class of contractive mappings, which are contracting (a feature of) triangles. We prove that maps contracting triangles are continuous and give the fixed point result for such mappings. We emphasize that our main theorem encompasses many functions, with significant applicability, for which the result holds, thereby representing a notable advancement in this research domain.
	\end{abstract}
	
	\keywords{Metric space, fixed point theorem, three-valued mappings, maps contracting triangles}
	
	\msc{47H10, 47H09}

	\section{Introduction}
	
	\noindent 
	
	The research field of fixed point theorems began with Banach's pioneering work (see \cite{Banach}). The initial results proved by Banach were generalized in many directions, some providing results independent from the original result (see, for example, \cite{BoydWong,Chatterjea,Ciric,Kannan,HardyRogers,Meir-Keeler,Rakotch,Reich,Subrahmanyam,Suzuki,Wardowski} and more recent results \cite{Anjum,Berinde,Jleli,Pata,Petrusel,Popescu,Proinov}).
	
	The high interest of the research community towards this field is highlighted by the significant amount of papers covering fixed point results. 
	
	Recently, Petrov \cite{Petrov} introduced a new type of mappings $T:X \to X$, where $(X,d)$ is a metric space, which can be characterized as mappings contracting perimeters of triangles.
	
	\begin{definition}[Petrov \cite{Petrov}]
		Let $(X,d)$ be a metric space with $|X|\geq 3$. We shall say that $T:X\to X$ is a mapping contracting perimeters of triangles on $X$ if there exists $\alpha \in [0,1)$ such that the inequality 
		$$d(Tx,Ty) + d(Ty,Tz) + d(Tz,Tx) \leq \alpha [d(x,y)+d(y,z)+d(z,x)],$$
		holds for all three pairwise distinct points $x,y,x \in X$.
	\end{definition}

	Petrov showed that mappings contracting perimeters of triangles are continuous and proved the fixed point theorem for such mappings.
	
	\begin{theorem}[Petrov \cite{Petrov}]
		Let $(X,d)$, $|X|\geq 3$ be a complete metric space and let $T:X\to X$ be a mapping contracting perimeters of triangles on $X$. Then, $T$ has a fixed point if and only if $T$ does not possess periodic points of prime period $2$. The number of fixed points is at most $2$.
		\label{Petrov}
	\end{theorem}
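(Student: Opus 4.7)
\medskip

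\noindent\textbf{Proof plan.} The plan is to dispose of the two easy directions (the necessity of the period-$2$ condition and the bound on the number of fixed points) by a single triangle argument, and then to obtain existence via a standard Picard-type iteration, where the geometric decay of the perimeter of three consecutive iterates plays the role of the usual geometric decay of distances in the Banach theorem.

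\smallskip

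\noindent\emph{Necessity and the count.} Suppose $T$ admits a fixed point $p$ together with a prime period-$2$ orbit $\{x,y\}$, so $Tx=y$, $Ty=x$, $x\neq y$, and $p,x,y$ are pairwise distinct (note $p\neq x$ since $Tp=p\neq Tx$). Applying the contractive inequality to $(p,x,y)$, the left-hand perimeter equals $d(p,y)+d(y,x)+d(x,p)$, which is literally the same number as the right-hand perimeter $d(p,x)+d(x,y)+d(y,p)$; the inequality then forces this common positive number to be bounded by $\alpha$ times itself, contradicting $\alpha<1$. The same idea bounds the number of fixed points: if $p,q,r$ were three pairwise distinct fixed points, the triangle $\{p,q,r\}$ would be sent to itself, so its perimeter would again have to strictly decrease, contradiction.

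\smallskip

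\noindent\emph{Existence, under the hypothesis.} Pick any $x_0\in X$ and set $x_{n+1}=Tx_n$. I would first check the degenerate situations: if $x_{n+1}=x_n$ for some $n$ we are done, and if $x_{n+2}=x_n$ for some $n$ then $x_n$ is periodic with period dividing $2$, so by hypothesis it is a fixed point. Otherwise the three consecutive iterates $x_n,x_{n+1},x_{n+2}$ are pairwise distinct for every $n$, so the contractive assumption applied to the triple $(x_n,x_{n+1},x_{n+2})$, whose image under $T$ is $(x_{n+1},x_{n+2},x_{n+3})$, yields
\begin{equation*}
P_{n+1}\ \leq\ \alpha\, P_n,\qquad P_n := d(x_n,x_{n+1})+d(x_{n+1},x_{n+2})+d(x_{n+2},x_n).
\end{equation*}
Hence $P_n\leq\alpha^n P_0$, and in particular $d(x_n,x_{n+1})\leq \alpha^n P_0$, so $\sum_n d(x_n,x_{n+1})<\infty$ and $(x_n)$ is Cauchy.

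\smallskip

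\noindent\emph{Passage to the limit.} By completeness $x_n\to x^*$, and invoking the continuity of $T$ (already established for this class of mappings in the material cited above) I pass to the limit in $x_{n+1}=Tx_n$ to conclude $Tx^*=x^*$. The main thing to be careful about is the bookkeeping in the degenerate cases of the iteration, i.e.\ making sure that the only way three consecutive iterates can fail to be pairwise distinct is either that we have already hit a fixed point or that we would produce a forbidden prime-period-$2$ orbit; everything else is a routine geometric-series argument, so I do not expect a serious obstacle beyond this case analysis.
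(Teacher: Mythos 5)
Your proposal is correct, and its engine is the same as the one the paper uses: a Picard iteration in which the perimeter $P_n$ of three consecutive (pairwise distinct) iterates decays geometrically, giving a Cauchy sequence, together with the observation that a triangle formed by a fixed point and a period-$2$ orbit, or by three distinct fixed points, is mapped onto a triangle with the same perimeter, contradicting $\alpha<1$. (Note that the paper does not prove this statement directly: it proves the more general Theorem \ref{Theorem} for $\varphi\in\Phi$ and recovers Petrov's theorem by taking $\varphi(a,b,c)=a+b+c$; your argument is exactly that proof specialized to the perimeter.) The one genuine divergence is the final step. You conclude $Tx^*=x^*$ by invoking the continuity of $T$, which is legitimate since continuity of such mappings is established independently (Petrov's result, or Proposition 2.1 here with $\varphi(a,b,c)=a+b+c$), and it makes the limit passage immediate. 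The paper instead avoids using continuity of $T$ at this point: it argues that the orbit is not eventually constant, extracts a subsequence with $x_{n(k)}\neq x^*$ and $x_{n(k)+1}\neq x^*$, applies the contractive inequality to the pairwise distinct triple $(x_{n(k)},x_{n(k)+1},x^*)$, and lets $k\to\infty$ (using continuity of $\varphi$) to force $d(x^*,Tx^*)=0$. The paper's route is the one to take if you want the fixed-point proof self-contained (or in settings where the continuity lemma is unavailable), and it is where the hypothesis on period-$2$ points quietly reappears; your route is shorter but leans on the continuity proposition. Your handling of the degenerate cases of the iteration (hitting a fixed point, or $x_{n+2}=x_n$ forcing prime period $1$ by the hypothesis) is sound and does the same job as the paper's assumption that no iterate is a fixed point.
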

	
	The results initially proven in \cite{Petrov} were extended to Kannan generalized mappings in \cite{Petrov-Kannan} and mappings contracting total pairwise distance in \cite{Petrov-2}.

	In this paper, we consider a new type of mappings $T:X\to X$ which can be characterized as mappings contracting (a feature) of triangles (not necessarily perimeters) and prove the fixed point theorem for such mappings. Although our proof is similar to classical proofs, we emphasize the fact that our results encompass a large family of three-valued functions, thus being a substantial advancement to the research field of fixed point theorems. Moreover, the examples of functions for which our results work have a significant applicability.
	
	\section{Mappings contracting triangles}
	
	\noindent
	
	Let $\Phi$ be the set of all functions $\varphi: [0,\infty)^3 \to [0,\infty)$ which satisfy the following properties:
	\begin{itemize}
		\item[$\Phi 1$)] $\varphi$ is symmetric:
		$$\varphi(a,b,c) = \varphi(a,c,b) = \varphi(b,a,c) = \varphi(b,c,a) = \varphi(c,a,b) = \varphi(c,b,a),$$
		for all $a,b,c \in [0,\infty)$;
		\item[$\Phi 2$)] $\varphi$ is continuous;
		\item[$\Phi 3$)] $\varphi$ is non-decreasing;
		\item[$\Phi 4$)] there exists $k >0$ such that $$k a \leq \varphi(a,b,c),$$
		for every $a,b,c \in [0,\infty)$;
		\item[$\Phi 5$)] $\varphi(a,b,c) = 0$ if and only if $a=b=c=0$.
	\end{itemize}

	\begin{remark}
		The following functions belong to $\Phi$:
		\begin{itemize}
			\item[1)] $\varphi(a,b,c) = a+b+c$;
			\item[2)] $\varphi(a,b,c) = \max\{a,b,c\}$;
			\item[3)] $\varphi(a,b,c) = \sqrt[p]{a^p+b^p+c^p}$, $p \in \n$, $p \geq 2$;
			\item[4)] $\varphi(a,b,c) = (\sqrt{a} + \sqrt{b} + \sqrt{c})^2$.
		\end{itemize}
	\end{remark}

	\begin{definition}
		Let $(X,d)$ be a metric space with $|X|\geq 3$. We shall say that $T:X\to X$ is a mapping contracting triangles on $X$ if there exists $\alpha \in [0,1)$ and $\varphi \in \Phi$ such that the inequality 
		\begin{equation}
			\varphi(d(Tx,Ty),d(Ty,Tz),d(Tz,Tx)) \leq \alpha \varphi(d(x,y),d(y,z),d(x,z)),
				\label{triangles}
		\end{equation}
		holds for all three pairwise distinct points $x,y,x \in X$.
	\end{definition}

	\begin{proposition}
		Mappings contracting triangles are continuous.
	\end{proposition}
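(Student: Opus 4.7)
The plan is to establish sequential continuity: given $x_n \to x_0$ in $X$, to conclude $Tx_n \to Tx_0$. The inequality (\ref{triangles}) combined with property $\Phi 4$ yields, for any three pairwise distinct $u,v,w \in X$,
\[
k\,d(Tu,Tv) \;\leq\; \varphi\bigl(d(Tu,Tv),d(Tv,Tw),d(Tw,Tu)\bigr) \;\leq\; \alpha\,\varphi\bigl(d(u,v),d(v,w),d(u,w)\bigr),
\]
so to bound $d(Tx_n, Tx_0)$ I must supply a third point. The naive choice of a fixed point $z \in X \setminus \{x_0\}$ fails, since letting $n \to \infty$ in the resulting inequality only gives $k\,\limsup_n d(Tx_n,Tx_0) \le \alpha\,\varphi(0, d(x_0,z), d(x_0,z))$, a positive constant. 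The key idea is to let the third point itself depend on the data by taking $z = x_m$ for an index $m$ from the sequence, and then performing a second limit $m \to \infty$.

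First I would reduce to the case $x_n \neq x_0$ for every $n$; otherwise continuity is trivial on a tail. Since $d(x_n,x_0) \to 0$ and $x_n \neq x_0$, the sequence must take infinitely many distinct values. Fix such an index $m$: because $x_m \neq x_0$ and $x_n \to x_0$, for all $n$ sufficiently large the triple $(x_n, x_0, x_m)$ is pairwise distinct, so applying the contraction gives
\[
k\,d(Tx_n, Tx_0) \;\leq\; \alpha\,\varphi\bigl(d(x_n,x_0),\,d(x_0,x_m),\,d(x_n,x_m)\bigr).
\]
Letting $n \to \infty$ with $m$ fixed, and using $d(x_n,x_0) \to 0$, $d(x_n,x_m) \to d(x_0,x_m)$, together with continuity of $\varphi$ ($\Phi 2$), yields
\[
k\,\limsup_{n \to \infty} d(Tx_n,Tx_0) \;\leq\; \alpha\,\varphi\bigl(0,\,d(x_0,x_m),\,d(x_0,x_m)\bigr).
\]
Finally, letting $m \to \infty$ and using $d(x_0,x_m) \to 0$ together with $\Phi 2$ and $\Phi 5$ (which forces $\varphi(0,0,0) = 0$), the right-hand side collapses to zero, and $Tx_n \to Tx_0$ follows.

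The only real obstacle is precisely the one flagged above: the triangle-contraction hypothesis controls a symmetric function of three distances, so singling out one distance requires a third anchor that moves in a controlled fashion. Property $\Phi 4$ is what extracts a single coordinate from the symmetric quantity $\varphi$, while $\Phi 2$ and $\Phi 5$ are what make the double limit shrink to zero; the monotonicity $\Phi 3$ is not actually needed for this statement.
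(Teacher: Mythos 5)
Your argument is correct, and it rests on the same key device as the paper's proof: feed into (\ref{triangles}) a triple consisting of $x_0$, the nearby point, and an auxiliary third point close to $x_0$, extract the single distance $d(Tx_0,Tx)$ via $\Phi 4$, and make the bound collapse using $\varphi(0,0,0)=0$ (from $\Phi 5$) and continuity of $\varphi$. The packaging differs, though. The paper argues in $\varepsilon$--$\delta$ form: it treats the isolated-point case separately, picks an anchor $y\neq x_0$ with $d(x_0,y)<\delta$ (possible since $x_0$ is an accumulation point), and uses the monotonicity $\Phi 3$ to dominate the right-hand side by $\alpha\,\varphi(2\delta,2\delta,2\delta)$, so that only continuity of $\varphi$ at the origin is invoked. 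You instead prove sequential continuity, draw the anchor from the convergent sequence itself, and perform a double limit ($n\to\infty$, then $m\to\infty$); this uses continuity of $\varphi$ at general points of $[0,\infty)^3$ but, as you observe, dispenses with $\Phi 3$ entirely, which is a correct mild sharpening of what the hypotheses are used for. One cosmetic repair: when not every $x_n$ differs from $x_0$, the indices with $x_n=x_0$ need not be confined to the complement of a tail, so instead of saying continuity is ``trivial on a tail'' you should simply split the index set --- terms with $x_n=x_0$ give $d(Tx_n,Tx_0)=0$, and the remaining (sub)sequence, if infinite, is handled by your double-limit argument; this is a one-line fix, not a gap.
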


	\begin{proof}
		If $x_0$ is an isolated point of $X$, then $T$ is continuous at $x_0$.
		
		Now, let $x_0$ be an accumulation point of $X$. Then, for every $\delta > 0$ there exists $y \in X$, $y\neq x_0$ such that $d(x_0,y) < \delta$. By (\ref{triangles}), we have for $x \in X$, $x \neq x_0$, $x\neq y$:
		\begin{equation*}
			\varphi(d(Tx_0,Tx),d(Tx_0,Ty),d(Tx,Ty)) \leq \alpha \varphi(d(x_0,x),d(x_0,y),d(x,y)).
		\end{equation*}
	
		If $d(x_0,x) < \delta$, since $\varphi$ is non-decreasing, we get 
		\begin{equation*}
			\varphi(d(Tx_0,Tx),d(Tx_0,Ty),d(Tx,Ty)) \leq \alpha \cdot \varphi(\delta, \delta, 2\delta) \leq \alpha \cdot \varphi(2\delta, 2\delta, 2\delta).
		\end{equation*}
	
		Since $\varphi \in \Phi$, by property $\Phi 4)$, there exists $k>0$ such that
		\begin{equation*}
			kd(Tx_0,Tx) \leq \alpha \cdot \varphi(2\delta, 2\delta, 2\delta).
		\end{equation*}
	
		Since $\varphi$ is continuous and $\varphi(0,0,0) = 0$, for every $\varepsilon >0$, there exists $\delta >0$ such that 
		$$\varphi(2\delta, 2\delta, 2\delta) < \varepsilon \dfrac{k}{\alpha}.$$
		
		Hence, we get $$kd(Tx_0, Tx) < \varepsilon k,$$
		by where $d(Tx_0,Tx) < \varepsilon$ which completes our proof.
	\end{proof}

	\begin{definition}
		Let $(X,d)$ be a metric space and $T:X\to X$. A point $x \in X$ is called a periodic point of period $n$ if $T^nx =x$. The least positive integer $n$ for which $T^n x=x$ is called the prime period of $x$.
	\end{definition}

	\begin{theorem}
		Let $(X,d)$, $|X|\geq 3$ be a complete metric space and let $T:X\to X$ be a mapping contracting triangles on $X$. Then, $T$ has a fixed point if and only if $T$ does not possess periodic points of prime period $2$. The number of fixed points is at most two.
		\label{Theorem}
	\end{theorem}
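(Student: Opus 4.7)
The plan is to split the theorem into three pieces: (i) existence of a fixed point rules out prime-period-$2$ points; (ii) absence of prime-period-$2$ points implies existence of a fixed point; (iii) there are at most two fixed points. For (i) and (iii) I would plug carefully chosen triples into the contraction~(\ref{triangles}) and exploit the symmetry axiom $\Phi 1$ together with the positivity axiom $\Phi 5$ to collapse the inequality into a contradiction. For (ii) I would run a Picard iteration and use $\Phi 4$ to extract geometric decay of consecutive distances.

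For (i), suppose $p = Tp$ and $\{q,q'\}$ is a $2$-cycle with $Tq=q'$, $Tq'=q$, and $q\neq q'$. A fixed point has period $1$, so $p\notin\{q,q'\}$ and the three points $p,q,q'$ are pairwise distinct. Substituting them into~(\ref{triangles}) and invoking symmetry of $\varphi$, both sides reduce to $\varphi(d(p,q),d(p,q'),d(q,q'))$, with the right-hand side carrying an extra factor of $\alpha$; since $\alpha<1$, this quantity must vanish, and then $\Phi 5$ forces $d(q,q')=0$, contradicting $q\neq q'$. Part (iii) is essentially the same trick applied to three distinct fixed points $p_1,p_2,p_3$: inequality~(\ref{triangles}) yields $\varphi(d(p_1,p_2),d(p_2,p_3),d(p_3,p_1)) \leq \alpha\,\varphi(d(p_1,p_2),d(p_2,p_3),d(p_3,p_1))$, so by $\Phi 5$ all three pairwise distances are zero, a contradiction.

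For (ii), pick any $x_0\in X$ and iterate $x_n := T^n x_0$. If $x_n = x_{n+1}$ for some $n$ we have a fixed point and are done; otherwise the no-period-$2$ hypothesis rules out $x_{n-1}=x_{n+1}$, so every three consecutive iterates are pairwise distinct and~(\ref{triangles}) may legitimately be applied. Setting $a_n := d(x_n, x_{n+1})$ and $b_n := d(x_n, x_{n+2})$, plugging $(x_{n-1}, x_n, x_{n+1})$ into~(\ref{triangles}) gives $\varphi(a_n, a_{n+1}, b_n) \leq \alpha\,\varphi(a_{n-1}, a_n, b_{n-1})$, which iterates down to $\varphi(a_n, a_{n+1}, b_n) \leq \alpha^n \varphi(a_0, a_1, b_0)$. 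Axiom $\Phi 4$ then yields $k a_n \leq \alpha^n \varphi(a_0, a_1, b_0)$, so $\sum a_n < \infty$, the sequence $(x_n)$ is Cauchy, completeness gives a limit $x^*$, and the already-established continuity of $T$ forces $Tx^* = x^*$. The main obstacle, and really the only subtle point, is the bookkeeping of pairwise-distinctness conditions needed to invoke~(\ref{triangles}) at each step; this is exactly where the hypothesis that $T$ has no prime-period-$2$ point is used.
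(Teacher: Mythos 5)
Your proposal is correct and follows the same skeleton as the paper's proof: a Picard iteration in which the quantity $\varphi(d(x_i,x_{i+1}),d(x_{i+1},x_{i+2}),d(x_i,x_{i+2}))$ contracts by the factor $\alpha$, axiom $\Phi 4$ converting this into geometric decay of $d(x_n,x_{n+1})$ and hence the Cauchy property, and the same substitution tricks (three distinct fixed points, respectively a fixed point together with a $2$-cycle) combined with $\Phi 1$ and $\Phi 5$ for the ``at most two'' bound and for the forward implication. The one place you genuinely diverge is in identifying the limit $x^*$ as a fixed point: you invoke the already-established continuity of $T$ (so $x_{n+1}=Tx_n\to Tx^*$ while $x_{n+1}\to x^*$), whereas the paper does not use the continuity Proposition there; it instead produces a subsequence with $x_{n(k)}\neq x^*$ and $x_{n(k)+1}\neq x^*$, plugs the triple $(x_{n(k)},x_{n(k)+1},x^*)$ into~(\ref{triangles}), and passes to the limit using continuity of $\varphi$ to get $\varphi(0,d(x^*,Tx^*),d(x^*,Tx^*))=0$. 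Your route is shorter and sidesteps the somewhat delicate bookkeeping the paper needs to guarantee such a subsequence exists; the paper's route has the merit of arguing directly from the contractive inequality, in the spirit of Petrov's original proof, without leaning on the continuity result. Your distinctness bookkeeping in part (ii) is also sound: once no iterate equals its successor, an equality $x_{n-1}=x_{n+1}$ would make $x_{n-1}$ a periodic point of prime period $2$, which is excluded by hypothesis.
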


	\begin{proof}
		Suppose that $T$ has no periodic points of prime period $2$. 
		
		Let $x_0 \in X$, arbitrarily chosen, and $$x_{n+1}=Tx_n,$$ for every $n \geq 0$, the Picard iteration. Suppose that $x_i$ is not a fixed point of $T$ for every $i=0,1,\dots$ Then $x_i\neq x_{i+1} = Tx_i$ and $x_{i+2} = TTx_i\neq x_i$ for every $i=0,1,\dots$ Hence, $x_i$, $x_{i+1}$, $x_{i+2}$ are pairwise distinct. Now, set for every $i=0,1,\dots$:
		$$d_i = \varphi(d(x_i,x_{i+1}),d(x_{i+1},x_{i+2}),d(x_i,x_{i+2})),$$
		
		By (\ref{triangles}), setting $x=x_i$, $y=x_{i+1}$ and $z=x_{i+2}$, we get 
		
		\begin{equation*}
			\begin{gathered}
			\varphi(d(Tx_i,Tx_{i+1}),d(Tx_{i+1},Tx_{i+2}),d(Tx_{i},Tx_{i+2})) \leq\\ \leq \alpha \varphi(d(x_i,x_{i+1}),d(x_{i+1},x_{i+2}),d(x_i,x_{i+2})),
			\end{gathered}
		\end{equation*}
		for every $i=0,1,\dots$ Then,
		\begin{equation*}
			\varphi(d(x_{i+1},x_{i+2}),d(x_{i+2},x_{i+3}),d(x_{i+1},x_{i+3})) \leq \alpha \varphi(d(x_i,x_{i+1}),d(x_{i+1},x_{i+2}),d(x_i,x_{i+2})),
		\end{equation*}
		i.e.
		\begin{equation*}
			d_{i+1} \leq \alpha d_i,
		\end{equation*}
		for every $i=0,1,\dots$
		
		Since $\alpha<1$, the sequence $\{d_i\}$ is decreasing, i.e.
		\begin{equation*}
			d_0>d_1>d_2>\dots>d_n>\dots
		\end{equation*}
		
		Since $\varphi \in \Phi$ we have 
		\begin{equation*}
			\begin{aligned}
				kd(x_1,x_2) &\leq d_0,\\
				kd(x_2,x_3) &\leq d_1 \leq \alpha d_0,\\
				kd(x_3,x_4) &\leq d_2 \leq \alpha d_1 \leq \alpha^2 d_0,\\
				\dots\dots&\dots\dots \\
				kd(x_n,x_{n+1}) &\leq d_{n-1} \leq \alpha^{n-1} d_0,\\
				kd(x_{n+1},x_{n+2}) &\leq d_n \leq \alpha^n d_0,\\
				\dots\dots&\dots\dots 
			\end{aligned}
		\end{equation*}
	
		For $p \geq 1$, by the triangle inequality, we have 
		\begin{equation*}
			\begin{aligned}
				d(x_n,x_{n+p}) &\leq d(x_n,x_{n+1}) + d(x_{n+1},x_{n+2})+ \dots + d(x_{n+p-1},x_{n+p})\\
				&\leq\dfrac{(\alpha^{n-1}d_0+\alpha^{n}d_0+\dots+\alpha^{n+p-2}d_0)}{k}\\
				& = \dfrac{\alpha^{n-1}}{k} \cdot \dfrac{1-\alpha^p}{1-\alpha}d_0\\
				&\leq \dfrac{\alpha^{n-1}}{(1-\alpha)k}d_0.
			\end{aligned}
		\end{equation*}
	
		Hence, for every $\varepsilon > 0 $ there exists $n_0$ such that 
		$$ \dfrac{\alpha^{n-1}}{(1-\alpha)k}d_0 < \varepsilon,$$
		for every $n \geq n_0$.
		
		Then, we get $d(x_n,x_{n+p}) < \varepsilon$ for every $n \geq n_0$ and $p \geq 1$.
		
		Thus, $\{x_n\}$ is a Cauchy sequence. By completeness of $(X,d)$, this sequence has a limit $x^* \in X$.
		
		If there exists $n_1 \geq 0$ such that $x_n = x_{n_1}$ for every $n \geq n_1$ then $d_{n_1}=0$ which contradicts the monotonicity of $\{d_n\}$. Therefore, since $T$ does not possess periodic points of prime period $2$, there exists a subsequence $\{x_{n(k)}\}_{k \geq 0}$ such that $x_{n(k)} \neq x^*$ and $x_{n(k)+1} \neq x^*$ for every $k \geq 0$.
		
		Taking in (\ref{triangles}) $x=x_{n(k)}$, $y=x_{n(k)+1}$, $z=x^*$ we have 
		\begin{equation*}
			\begin{gathered}
				\varphi(d(Tx_{n(k)},Tx_{n(k)+1}),d(Tx_{n(k)+1},Tx^*),d(Tx_{n(k)},Tx^*)) \leq\\ \leq \alpha \varphi(d(x_{n(k)},x_{n(k)+1}),d(x_{n(k)+1},x^*),d(x_{n(k)},x^*)),
			\end{gathered}
		\end{equation*}
		by where 
		\begin{equation*}
			\begin{gathered}
				\varphi(d(x_{n(k)+1},x_{n(k)+2}),d(x_{n(k)+2},Tx^*),d(x_{n(k)+1},Tx^*)) \leq\\ \leq \alpha \varphi(d(x_{n(k)},x_{n(k)+1}),d(x_{n(k)+1},x^*),d(x_{n(k)},x^*)),
			\end{gathered}
		\end{equation*}
		
		Passing to the limit as $k \to \infty$, since $\varphi$ is continuous, we get
		\begin{equation*}
			\begin{gathered}
				\varphi(d(x^*,x^*),d(x^*,Tx^*),d(x^*,Tx^*)) \leq\\ \leq \alpha \varphi(d(x^*,x^*),d(x^*,x^*),d(x^*,x^*)),
			\end{gathered}
		\end{equation*}
		or 
		\begin{equation*}
				\varphi(0,d(x^*,Tx^*),d(x^*,Tx^*)) \leq \alpha \varphi(0,0,0) = 0.
		\end{equation*}
		
		Then, $\varphi(0,d(x^*,Tx^*),d(x^*,Tx^*)) = 0$, by where $d(x^*,Tx^*)=0$. Hence, $x^*$ is a fixed point of $T$.
		
		Now, we suppose that there exist three pairwise distinct fixed points of $T$, $x,y$ and $z$, i.e. $Tx=x$, $Ty=y$ and $Tz=z$.
		
		Then, by (\ref{triangles}) we have  
		\begin{equation*}
			\varphi(d(x,y),d(y,z),d(x,z)) \leq \alpha \varphi(d(x,y),d(y,z),d(x,z)),
		\end{equation*}
		by where $$\varphi(d(x,y),d(y,z),d(x,z)) = 0$$ which implies, via $\Phi 5)$, that $d(x,y) = d(y,z) = d(x,z) = 0$, which is a contradiction.
		
		Conversely, let $T$ have a fixed point $x^*$ and $x$ a periodic point of prime period $2$. 
		
		Then,
		\begin{equation*}
			\varphi(d(Tx,T^2x),d(T^2x,Tx^*),d(Tx^*,Tx)) \leq \alpha \varphi(d(x,Tx),d(Tx,x^*),d(x,x^*)).
		\end{equation*}
	
		Hence,
		\begin{equation*}
			\varphi(d(x,Tx),d(x,x^*),d(x^*,Tx)) \leq \alpha \varphi(d(x,Tx),d(Tx,x^*),d(x,x^*)).
		\end{equation*}
		
		Since $\varphi$ is symmetric , we get $\varphi(d(x,Tx),d(x,x^*),d(x^*,Tx))=0$, i.e. $d(x,Tx) =0$ which is a contradiction.
	\end{proof}

	\begin{remark}
		In the case when there exists some iteration $x_{n+1} = Tx_n$, $n \geq 1$ such that $x_n \to x*$, $Tx^*=x^*$ and $x_{n(k)} \neq x^*$, for $k \geq 1$, then $T$ has a unique fixed point.
		
		Indeed, suppose that $T$ has another fixed point $x^{**}\neq x^*$. Obviously, $x_{n(k)} \neq x^{**}$ for $k=1,2,\dots$ Hence, setting $x=x_{n(k)}$, $y=x^*$ and $z=x^{**}$ in (\ref{triangles}), we get for every $k \geq 1$ 
		\begin{equation*}
			\begin{gathered}
				\varphi(d(Tx_{n(k)},Tx^*),d(Tx^*,Tx^{**}),d(Tx_{n(k)},Tx^{**})) \leq\\ \leq \alpha \varphi(d(x_{n(k)},x^*),d(x^*,x^{**}),d(x_{n(k)},x^{**})),
			\end{gathered}
		\end{equation*}
		by where 
		\begin{equation*}
			\begin{gathered}
				\varphi(d(x_{n(k)+1},x^*),d(x^*,x^{**}),d(x_{n(k)+1},x^{**})) \leq\\ \leq \alpha \varphi(d(x_{n(k)},x^*),d(x^*,x^{**}),d(x_{n(k)},x^{**})).
			\end{gathered}
		\end{equation*}
	
		Since $\varphi$ is continuous, taking the limit as $k \to \infty$, we obtain
		\begin{equation*}
			\begin{gathered}
				\varphi(d(x^*,x^*),d(x^*,x^{**}),d(x^*,x^{**})) \leq \alpha \varphi(d(x^*,x^*),d(x^*,x^{**}),d(x^*,x^{**})).
			\end{gathered}
		\end{equation*}
		
		Hence, $\varphi(d(x^*,x^*),d(x^*,x^{**}),d(x^*,x^{**}))=0$. Since $\varphi \in \Phi$ we get $d(x^*,x^{**}) = 0$, which is a contradiction 
	\end{remark}

	\begin{remark}
		Taking $\varphi(a,b,c) = a+b+c$ in Theorem \ref{Theorem} we obtain Theorem \ref{Petrov}.
	\end{remark}

	\begin{corollary}
		Let $(X,d)$, $|X|\geq 3$ be a complete metric space and let $T:X\to X$ a mapping such that 
		\begin{equation*}
			\max\{d(Tx,Ty),d(Ty,Tz),d(Tz,Tx)\} \leq \alpha \varphi\max\{d(x,y),d(y,z),d(x,z)\},
		\end{equation*}
		for every pairwise distinct points $x,y,x \in X$, where $\alpha \in [0,1)$. Then, $T$ has a fixed point if and only if $T$ does not possess periodic points of prime period $2$. The number of fixed points is at most two.
		\label{2.7}
	\end{corollary}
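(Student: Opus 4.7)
The plan is to obtain Corollary \ref{2.7} as an immediate specialization of Theorem \ref{Theorem}. Specifically, I would take $\varphi(a,b,c) = \max\{a,b,c\}$ and verify that this $\varphi$ belongs to the class $\Phi$; once this is done, the hypothesis of the corollary becomes exactly inequality (\ref{triangles}) and the conclusion is then just the conclusion of Theorem \ref{Theorem}.

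The verification that $\varphi(a,b,c) = \max\{a,b,c\} \in \Phi$ amounts to routine checks of the five axioms. Symmetry ($\Phi 1$) is clear since $\max$ depends only on the multiset $\{a,b,c\}$. Continuity ($\Phi 2$) holds because $\max$ of finitely many continuous coordinate projections is continuous. Monotonicity ($\Phi 3$) is immediate: if $a\le a'$, $b\le b'$, $c\le c'$ then $\max\{a,b,c\}\le\max\{a',b',c'\}$. For ($\Phi 4$), one may take $k = 1$, since $a \le \max\{a,b,c\}$ for all $a,b,c \ge 0$. Finally, ($\Phi 5$) is trivial, as $\max\{a,b,c\}=0$ exactly when each of $a,b,c$ equals $0$.

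With these verifications, the inequality in the statement of the corollary becomes
\begin{equation*}
\varphi(d(Tx,Ty),d(Ty,Tz),d(Tz,Tx)) \leq \alpha\,\varphi(d(x,y),d(y,z),d(x,z)),
\end{equation*}
so $T$ is a mapping contracting triangles on $X$ in the sense of our definition. Applying Theorem \ref{Theorem} directly yields both the fixed point characterization (existence iff no periodic points of prime period $2$) and the bound of at most two fixed points. There is no real obstacle here: the only mildly delicate point is writing down $\Phi 4$ with an explicit constant, which is handled by the choice $k=1$. One should also note, as a small clarification, that the symbol ``$\varphi$'' appearing between $\alpha$ and $\max$ in the statement is a typographical artefact and should be removed before applying the argument.
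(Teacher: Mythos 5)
Your proposal is correct and matches the paper's proof, which likewise obtains the corollary by taking $\varphi(a,b,c)=\max\{a,b,c\}$ in Theorem \ref{Theorem} (the paper lists this $\varphi$ among the members of $\Phi$ in its earlier remark, so your explicit verification of the axioms, and the choice $k=1$ for $\Phi 4$, just fills in that routine check). Your observation that the stray ``$\varphi$'' in the corollary's displayed inequality is a typographical artefact is also accurate.
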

	\begin{proof}
		We take $\varphi(a,b,c) = \max\{a,b,c\}$ in Theorem \ref{Theorem}.
	\end{proof}

	\begin{corollary}
		Let $(X,d)$, $|X|\geq 3$ be a complete metric space and let $T:X\to X$ a mapping such that 
		\begin{equation*}
			d^2(Tx,Ty)+d^2(Ty,Tz)+d^2(Tz,Tx)\} \leq \beta (d^2(x,y)+d^2(y,z)+d^2(x,z)),
		\end{equation*}
		for every pairwise distinct points $x,y,x \in X$, where $\beta \in [0,1)$. Then, $T$ has a fixed point if and only if $T$ does not possess periodic points of prime period $2$. The number of fixed points is at most two.
	\end{corollary}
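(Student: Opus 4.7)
The strategy is to reduce this corollary directly to Theorem \ref{Theorem} by choosing a suitable $\varphi \in \Phi$. The natural candidate is $\varphi(a,b,c) = \sqrt{a^2+b^2+c^2}$, which is the $p=2$ instance of item 3 in the Remark following the definition of $\Phi$, and hence already known to belong to $\Phi$ (it is symmetric, continuous, non-decreasing in each argument, vanishes iff $a=b=c=0$, and satisfies $\varphi(a,b,c)\geq a$, so $\Phi 4$ holds with $k=1$).

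\textbf{Key step.} With this $\varphi$, the hypothesis of the corollary rewrites as
\begin{equation*}
\bigl[\varphi(d(Tx,Ty),d(Ty,Tz),d(Tz,Tx))\bigr]^2 \leq \beta\,\bigl[\varphi(d(x,y),d(y,z),d(x,z))\bigr]^2,
\end{equation*}
and since both sides are non-negative, taking square roots yields
\begin{equation*}
\varphi(d(Tx,Ty),d(Ty,Tz),d(Tz,Tx)) \leq \sqrt{\beta}\,\varphi(d(x,y),d(y,z),d(x,z)).
\end{equation*}
Setting $\alpha:=\sqrt{\beta}$, we have $\alpha\in[0,1)$ because $\beta\in[0,1)$, so $T$ is a mapping contracting triangles in the sense of the definition preceding Theorem \ref{Theorem}.

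\textbf{Conclusion.} Applying Theorem \ref{Theorem} to $T$ with this choice of $\varphi$ and $\alpha$ gives exactly the claimed equivalence and the bound of at most two fixed points. There is no real obstacle here; the only thing to verify carefully is that $\varphi(a,b,c)=\sqrt{a^2+b^2+c^2}$ lies in $\Phi$, which is immediate from its definition, and that the square-root passage preserves the constant into $[0,1)$, which follows from $\beta<1$.
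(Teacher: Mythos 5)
Your proposal is correct and matches the paper's proof exactly: the paper also takes $\varphi(a,b,c)=\sqrt{a^2+b^2+c^2}$ and $\alpha=\sqrt{\beta}$ in Theorem \ref{Theorem}. Your only addition is the (straightforward) verification that this $\varphi$ lies in $\Phi$ and that the square-root passage keeps the constant in $[0,1)$, which the paper leaves implicit.
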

	\begin{proof}
		We take $\varphi(a,b,c) = \sqrt{a^2+b^2+c^2}$ and $\alpha=\sqrt{\beta}$ in Theorem \ref{Theorem}.
	\end{proof}

	\begin{example}
		Let $X=\{A,B,C,D,E\}$, and as in Figure \ref{Ex1}, let
		$$d(A,B) = d(A,D) = d(A,E) =d(B,C) = d(B,E) = 4,$$
		$$d(C,E) = d(D,E) = 3$$
		$$d(A,C) = d(B,D) = d(C,D) = 2,$$
		and let $T: X \to X$ such that $TA=C$, $TB=D$, $TC=TD=TE=E$. 
		
		\begin{figure}[h!]
			\centering
			\includegraphics[width=8cm]{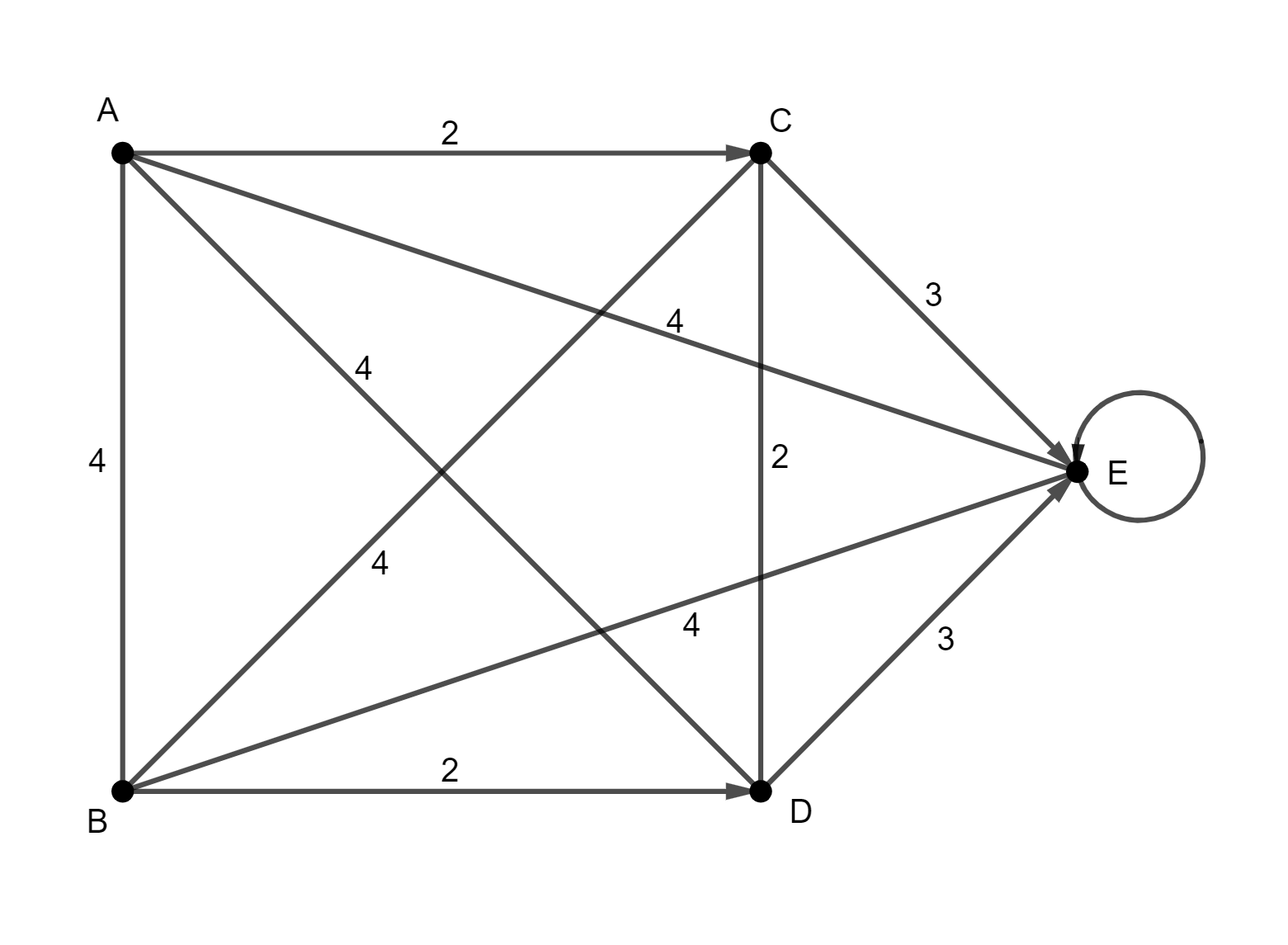}
			\caption{Example 2.1}
			\label{Ex1}
		\end{figure}
		Since $$d(TA,TB)+d(TB,TC)+d(TA,TC) = 10 = d(A,B) + d(B,C) + d(A,C),$$
		$T$ is not a mapping contracting perimeters of triangles, so Theorem \ref{Petrov} does not work. 
		
		However, we have
		\begin{equation*}
			\begin{gathered}
			M(TA,TB,TC) := \max \{d(TA,TB),d(TB,TC),d(TA,TC)\} = 3 \leq \\
			\leq  \alpha \max\{d(A,B),d(B,C),d(A,C)\} = \alpha M(A,B,C) = 4 \alpha,
		\end{gathered}
		\end{equation*}
		for $\alpha \geq \frac34$.
	
		Also, 
		\begin{equation*}
			\begin{aligned}
				M(TA,TB,TD) = 3 & \text{    and  } & M(A,B,D) = 4\\
				M(TA,TD,TE) = 3 & \text{    and  } & M(A,D,E) = 4\\
				M(TA,TC,TD) = 3 & \text{    and  } & M(A,C,D) = 4\\
				M(TA,TC,TE) = 3 & \text{    and  } & M(A,C,E) = 4\\
				M(TA,TD,TE) = 3 & \text{    and  } & M(A,D,E) = 4\\
				M(TB,TC,TD) = 3 & \text{    and  } & M(B,C,D) = 4\\
				M(TB,TC,TE) = 3 & \text{    and  } & M(B,C,E) = 4\\
				M(TB,TD,TE) = 3 & \text{    and  } & M(B,D,E) = 4\\
				M(TC,TD,TE) = 0 & \text{    and  } & M(C,D,E) = 3.\\
			\end{aligned}
		\end{equation*}
		Therefore, 
		$$M(Tx,Ty,Tz) \leq \dfrac34 M(x,y,z),$$
		for all three pairwise distinct points $x,y,z \in X$. Hence, we can apply Corollary \ref{2.7}. 
		
		We note that $T$ is not a contraction ($d(TA,TC) = 3 > 2 = d(A,C)$).	
	\end{example}

	\begin{example}
		Let $X=\{A,B,C,D\}$, and as in Figure \ref{Ex2} let
		$$d(A,B) = d(A,C) = d(A,D) = d(B,C) = d(B,D) = 2,$$
		$$d(C,D) = 3,$$
		and let $T: X \to X$ such that $TA=TC=C$, $TB=TD=D$.
		
		\begin{figure}[h!]
			\centering
			\includegraphics[width=5cm]{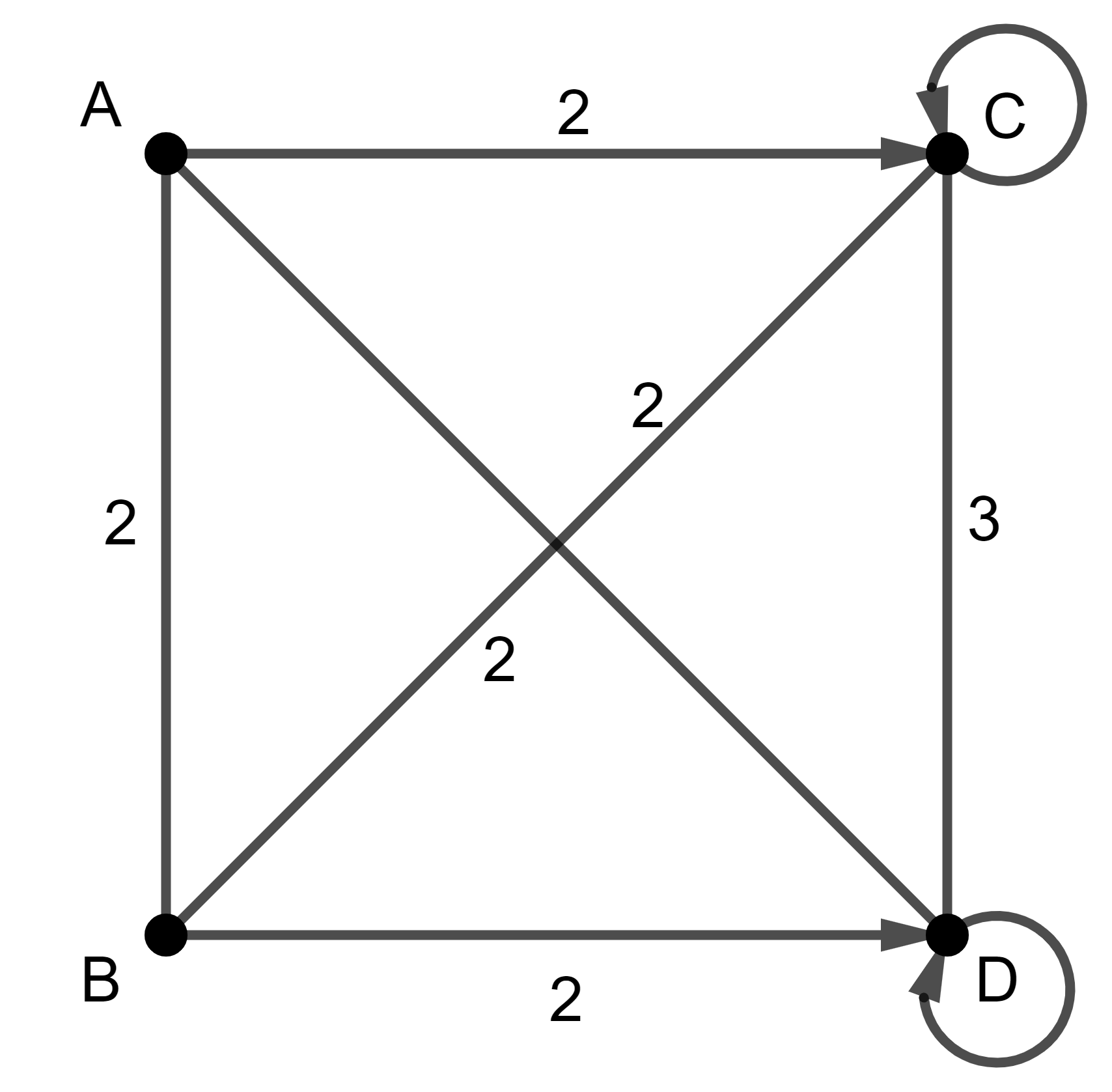}
			\caption{Example  2.2}
			\label{Ex2}
		\end{figure}
		
		Since  $$d(TA,TB)+d(TB,TC)+d(TA,TC) = 6 = d(A,B) + d(B,C) + d(A,C),$$
		$T$ does not contract perimeters of triangles. Moreover, $M(TA,TB,TC) = 3$ and $M(A,B,C) = 2$, so Corollary \ref{2.7} does not work.
		
		However, 
		\begin{equation*}
			N(TA,TB,TC) :=\left(\sqrt{d(TA,TB)}+\sqrt{d(TB,TC)}+\sqrt{d(TA,TC)}\right)^2 = (2\sqrt{3})^2 = 12
		\end{equation*}
		and
		\begin{equation*}
			N(A,B,C) = \left(\sqrt{d(A,B)}+\sqrt{d(B,C)}+\sqrt{d(A,C)}\right)^2 = (3\sqrt{2})^2 = 18,
		\end{equation*}
		so $N(TA,TB,TC) \leq \frac23 N(A,B,C)$.
		
		Also, 
		\begin{equation*}
			\begin{gathered}
				N(TA,TB,TD) = 12 \leq \dfrac23 \cdot 18 = \dfrac23 N(A,B,D)\\
				N(TA,TC,TD) = 12 \leq \dfrac23 \cdot (11+4\sqrt{6}) = \dfrac23 N(A,C,D)\\
				N(TB,TC,TD) = 12 \leq \dfrac23 \cdot (11+4\sqrt{6}) = \dfrac23 N(B,C,D).\\
			\end{gathered}
		\end{equation*}
	
		Therefore, we can apply Theorem \ref{Theorem} with $\varphi(a,b,c) = (\sqrt{a}+\sqrt{b}+\sqrt{c})^2$ and $\alpha = \dfrac23$.
	\end{example}

	\medskip
	\vspace{1.2ex}
	
\end{document}